\newtheorem{theorem}{Theorem}[section]
\newtheorem{lemma}[theorem]{Lemma}
\theoremstyle{definition}
\newcommand{\cC}{\mathcal{C}}
\newcommand{\cF}{\mathcal{F}}
\newcommand{\cG}{\mathcal{G}}
\newcommand{\cS}{\mathcal{S}}
\newcommand{\cH}{\mathcal{H}}
\newcommand{\eps}{\varepsilon}
\title{Improved bound on the number of cycle sets}
\author{
Rajko Nenadov\thanks{School of Computer Science, University of Auckland, New Zealand. Email: \texttt{rajko.nenadov@auckland.ac.nz}. Research supported by the Marsden Fund of the Royal Society of New Zealand.}
}
\date{}
\begin{document}

\maketitle

\begin{abstract}
The cycle set of a graph $G$ is the set consisting of all sizes of cycles in $G$. Answering a conjecture of Erd\H{o}s and Faudree, Verstra\"{e}te showed that there are at most $2^{n - n^{1/10}}$ different cycle sets of graphs with $n$ vertices. We improve this bound to $2^{n - n^{1/2 - o(1)}}$. Our proof follows the general strategy of Verstra\"{e}te of reducing the problem to counting cycle sets of Hamiltonian graphs with many chords or a large maximum degree. The key new ingredients are near-optimal container lemmata for cycle sets of such graphs.
\end{abstract}

\section{Introduction}

Given a graph $G$ with $n$ vertices, the \emph{cycle set $\cS(G) \subseteq \{3, \ldots, n\}$} is defined as $\ell \in \cS(G)$ if and only if $G$ contains a cycle of size $\ell$. The study of sufficient conditions which guarantee that $\cS(G)$ has certain properties is one of the central topics in graph theory. For example, Dirac's theorem \cite{dirac52} gives a sufficient condition for $n \in \cS(G)$ (that is, $G$ is \emph{Hamiltonian}) in terms of the minimum degree of $G$; Mantel's theorem gives a sufficient condition which ensures $3 \in \cS(G)$ in terms of the number of edges, and so on. Closely related to Dirac's theorem, and many other similar statements about hamiltonicity of graphs, are sufficient conditions which ensure that $G$ is \emph{pancyclic}, that is, $\cS(G) = \{3, \ldots, n\}$ (e.g.~see \cite{bauer90pan,bondy71pan,draganic2024pancyclicity}).

Previous examples focus on either a particular value appearing in $\cS(G)$, or $\cS(G)$ itself being a particular set. Another influential line of research is the study of the size and structure of $\cS(G)$. It is a simple exercise to show that if a graph $G$ has minimum degree $d$, then $|\cS(G)| \ge d - 1$. Sudakov and Verstra\"{e}te \cite{sudakov08cyclelengths} showed that if $G$ is a graph with average degree $d$ and girth at least $2g+1$, then $|\cS(G)| = \Omega(d^g)$. Moreover, they showed that $\cS(G)$ contains a large subset of consecutive values. Milans, Pfender, Rautenbach, Regen, and West \cite{milans12spectra} showed that if $G$ contains a Hamilton cycle and has $n + p$ edges, then $|\cS(G)| = \Omega(\sqrt{p})$. While this result is, in general, optimal, Buci\'c, Gishboliner, and Sudakov \cite{bucic22ham} showed that if $G$ is Hamiltonian with minimum degree at least $3$, then $|\cS(G)| \ge n^{1 - o(1)}$. A celebrated result of Gy\'arf\'as, Koml\'os, and Szemer\'edi \cite{gyarfas84cycles} shows that if $G$ has average degree $d$, then $\sum_{\ell \in \cS(G)} 1 / \ell > \eps \log(d)$. Note that this is really a structural result: if a graph does not have short cycles, it has many long ones. The lower bound was recently strengthened to an almost optimal $(1/2 - o_d(1))\log(d)$ by Liu and Montgomery \cite{hong23cycle}.

The previous list of examples merely touches on the long list of results about cycle sets and is only meant to convey their breadth and depth. For a thorough treatment of the topic, we refer the reader to the survey by Verstra\"{e}te \cite{verstraete16book}. With this in mind, problems of Erd\H{o}s (see \cite[Problem 5]{erdos97problems}; he attributes them to Faudree and himself) of characterising subsets of $\{3, \ldots, n\}$ which are the cycle set of a $n$-vertex graph, or how many different cycle sets of $n$-vertex graphs there are, are  rather natural ones. Erd\H{o}s conjectured that cycle sets form a vanishing fraction of all possible subsets, and this was verified by Verstra\"{e}te \cite{verstraete04cycles}. In particular, he showed that there are at most $2^{n - n^{1/10}}$ subsets of $\{3, \ldots, n\}$ which are the cycle set of a $n$-vertex graph. We improve this bound:

\begin{theorem} \label{thm:main}
    The number of different cycle sets of $n$-vertex graphs is at most $2^{n - \Omega(\sqrt{n} / \log^{3/2}(n))}$.
\end{theorem}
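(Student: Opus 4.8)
The plan is to follow Verstra\"{e}te's reduction scheme but to replace the two core counting steps with sharper container-type lemmata. The first step is a preprocessing/reduction: given an arbitrary $n$-vertex graph $G$, we want to encode $\cS(G)$ using few bits. Following \cite{verstraete04cycles}, one argues that it suffices to control cycle sets of graphs that are ``close to'' a disjoint union of a Hamiltonian graph (on a large vertex subset) plus a bounded-complexity remainder, and the real difficulty concentrates on Hamiltonian graphs. So I would first set up: if $G$ has no long path/cycle, then $\cS(G)$ misses a large interval of top values and is cheap to encode; hence we may assume $G$ contains a path (or cycle) on $(1-o(1))n$ vertices, and after splitting off and separately accounting for the few leftover vertices, we may assume $G$ is Hamiltonian. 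Within a Hamiltonian graph, the edges not on the fixed Hamilton cycle $C$ are \emph{chords}, and $\cS(G)$ is determined by $C$ together with the set of chords; if there are few chords we count directly, and if there are many chords (or a vertex of large degree) we invoke the new container lemma. So the skeleton is: (i) reduce to Hamiltonian $G$; (ii) split into the ``few chords'' regime and the ``many chords / large max degree'' regime; (iii) in each regime bound the number of achievable cycle sets by $2^{n-\Omega(\sqrt n/\log^{3/2}n)}$.

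The heart of the argument is step (iii) in the many-chords regime, and this is where I expect the bulk of the work and the main obstacle to lie. The goal is a container lemma: a small family $\cC$ of subsets of $\{3,\dots,n\}$ such that every Hamiltonian graph $G$ (with, say, at least $m$ chords, or maximum degree at least $\Delta$) has $\cS(G)$ contained in some $S\in\cC$ with $|S|\le n - n^{1/2-o(1)}$, and $\log|\cC|$ itself small. The intuition driving the bound is quantitative: a Hamiltonian graph with a chord already has, by the Milans--Pfender--Rautenbach--Regen--West bound \cite{milans12spectra}, $\Omega(\sqrt p)$ distinct cycle lengths where $p$ is the number of extra edges, and more importantly with many chords one gets long runs of \emph{consecutive} cycle lengths (rotating a chord endpoint along $C$ changes the cycle length by one). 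The container should exploit that many chords force a structured, ``dense near an interval'' cycle set: roughly, the cycle set either contains a long block of consecutive integers or can be pinned down by a short fingerprint of the chord configuration. Concretely, I would partition the Hamilton cycle into $\sqrt n$ arcs of length $\sqrt n$; a chord is described (up to the error we can afford) by the pair of arcs it joins, giving $O(\log n)$ bits per chord-class and revealing which gaps of length $\approx\sqrt n$ in $\{3,\dots,n\}$ must be ``filled'' with consecutive cycle lengths produced by sliding. Summing the savings over all forced consecutive blocks yields the $\sqrt n$ deficit, and the $\log^{3/2}n$ loss comes from the per-arc description cost and from a union bound over the partition; the polylog is the price of making the container small enough that $\log|\cC|$ does not eat the gain.

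For the large-maximum-degree regime the mechanism is cleaner: a vertex $v$ of degree $\Delta$ on the Hamilton cycle yields, for each pair of neighbours of $v$, a cycle through $v$, and by a greedy/extremal argument these give $\Omega(\Delta)$ (in fact a dense set of) consecutive cycle lengths; choosing the threshold $\Delta = n^{1/2-o(1)}$ balances against the $\Delta \log n$ cost of recording the neighbourhood of $v$, again up to polylogarithmic factors. The few-chords / small-max-degree case is handled by a direct count: with fewer than $m$ chords and max degree below $\Delta$, the graph is specified by the Hamilton cycle (which only contributes to which vertex set we are on, already paid for) plus $m$ chord positions, so the number of such graphs — hence of such cycle sets — is at most $\binom{\binom n2}{m}\le 2^{O(m\log n)}$, and with $m = \sqrt n/\log^{2}n$ this is comfortably below the target; one checks that the regime boundaries ($m$, $\Delta$) can be chosen consistently so that all three estimates simultaneously give $2^{n-\Omega(\sqrt n/\log^{3/2}n)}$.

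The main obstacle, as noted, is proving the container lemma with the right parameters: one must show both that many chords \emph{force} a cycle set lying in a small structured family (the ``sliding chord $\Rightarrow$ consecutive cycle lengths'' phenomenon has to be made robust to the coarse $\sqrt n$-arc discretisation, and one must ensure the forced consecutive blocks are genuinely numerous and not clustered), and that this family has few members. A secondary technical point is the reduction to Hamiltonian graphs without losing more than $o(\sqrt n)$ in the exponent — in particular handling graphs whose longest cycle is a constant factor shorter than $n$, where one may need to iterate the argument on the longest-cycle subgraph and combine cycle sets across components.
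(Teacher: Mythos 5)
Your overall skeleton correctly mirrors Verstra\"ete's reduction, and the paper uses exactly that decomposition: graphs with no long cycle are discarded cheaply, graphs with a long cycle but few extra edges are counted directly, and the remaining graphs are shown to contain an induced Hamiltonian subgraph with either a large-degree vertex or many chords, with a container lemma applied in each of those two cases. So steps (i) and (ii) of your plan are on the right track.

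The gap is in step (iii), the container lemma itself, and it is a real one. Your proposed mechanism is to discretize the Hamilton cycle into $\sqrt n$ arcs, record only the arc-pair for each chord class, and claim that this forces long runs of \emph{consecutive} cycle lengths ``by sliding.'' But chords do not slide: a chord is a fixed edge, and the cycle lengths it and its interactions contribute are not intervals. A vertex of degree $\Delta$ with neighbours $a_1<\dots<a_\Delta$ on the cycle contributes roughly the difference set $\{a_j-a_i\}$, which has $\Omega(\Delta)$ elements but is in general nowhere near an interval of integers, and a set of $m$ chords between two arcs can likewise produce a scattered set of lengths. Because of this, you cannot replace the per-chord description cost of $\Theta(\log n)$ bits with an $O(\log n)$ total cost for the block. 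Once you account honestly for the description cost, the balance you propose fails: recording the neighbourhood of a degree-$\Delta$ vertex costs $\Theta(\Delta\log n)$ bits, while the savings from $\Omega(\Delta)$ forced cycle lengths is only $\Omega(\Delta)$, so the container family has total weight $2^{\Theta(\Delta\log n)-\Omega(\Delta)}$, which is enormous rather than small. The same imbalance kills the many-chords case. What makes the paper's argument go through is precisely the ingredient your sketch does not contain: the Fingerprint Lemma (Lemma~\ref{lemma:compression}), which shows that from a set $R$ of chords one can select a \emph{square-root-sized} subset $F\subseteq R$, $|F|=\sqrt{|R|}$, such that $H+F$ already realises $\Omega(|R|)$ distinct cycle lengths. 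That drops the description cost to $\sqrt{|R|}\log n \ll |R|$ and is what makes $\sum_S 2^{-|S|}$ small. Without this (or an equivalent compression mechanism), your container step cannot close.

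Two secondary remarks. First, your threshold calibration is off: you take the few-chords cutoff at $m=\sqrt n/\log^2 n$, but then the many-chords regime must handle graphs with as few as $\sqrt n/\log^2 n$ chords, where a container of the paper's strength only saves $\Omega(\sqrt m/\log n)\approx n^{1/4}/\log^2 n$ in the exponent, far short of $\sqrt n/\log^{3/2}n$; the correct cutoff is around $n/\log n$ extra edges, where a direct count of graphs on a near-Hamilton cycle still costs only $2^{O(n/\log n \cdot \log n)}=2^{O(n)}$ with a usable constant, as in the paper. Second, even given the fingerprint idea, the many-chords case still needs additional structural work that your sketch does not anticipate: the paper splits the chord set by length scale, extracts either an independent family (shortcutting argument), or a chain/antichain via Erd\H{o}s--Szekeres to feed into the Fingerprint Lemma, or combines an antichain with an independent family using the Milans et al.~shift idea. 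None of these steps is suggested by the arc-discretisation picture.
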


The proof of Theorem \ref{thm:main} follows the overall strategy of Verstra\"{e}te \cite{verstraete04cycles}, which through a series of assumptions reduces the problem to counting cycle sets of Hamiltonian graphs with many chords or a large maximum degree. We improve upon both of these sub-problems using container-type lemmata, presented in Section \ref{sec:containers}. Both of these lemmata crucially rely on Lemma \ref{lemma:compression} which provides a small \emph{fingerprint} for a given set of chords. We postpone the discussion about possible further improvements of Theorem \ref{thm:main} to Section \ref{sec:concluding}.

At present, we do not know how far Theorem \ref{thm:main} is from the truth. The best known lower bound is a construction due to Faudree: Suppose $n$ is even. Given $A \subseteq \{n/2 +1 , \ldots, n\}$, form the graph $G$ by taking a Hamilton path $1, 2, \ldots, n-1, n$ and all the edges $\{1, a\}$ for $a \in A$. Then $\cS(G) \cap \{n/2+1, \ldots, n\} = A$. This shows there are at least $2^{n/2}$ different cycle sets of $n$-vertex graphs. Improving this to $2^{(1 + c)n/2}$, for a constant $c > 0$, would already be interesting.

\section{Definitions}

Throughout the paper we tacitly avoid use of floors and ceilings. All stated inequalities can be made to hold with sufficient margin to accommodate for this. For brevity, we introduce a number of definitions.

\begin{itemize}
\item Given a family of graphs $\cG$, we define $\cF(\cG)$ to be the family of all cycle sets of graphs in $\cG$:
$$
    \cF(\cG) = \{\cS(G) \colon G \in \cG\}.
$$

\item Given a family of cycles $\cC$ in some graph $G$, we define
$$
    \cS(\cC) = \{|C| \colon C \in \cC\}.
$$

\item We refer to a graph $G$ on the vertex set $\{1, \ldots, n\}$ as \emph{labelled}. Given an edge $e \in G$ in a labelled graph, we denote with $(a_e, b_e)$ its endpoints such that $a_e < b_e$. 

\item Suppose $G$ is a labelled graph such that $(1, 2, \ldots, n-1, n, 1)$ forms a Hamilton cycle, which we denote with $H$. We refer to the edges $E(G) \setminus E(H)$ as \emph{chords}.  For two distinct chords $e, e' \in E(G) \setminus E(H)$, there are either one or two cycles in $G$ which use both $e$ and $e'$, with all other edges being in $E(H)$. If there is one such cycle, we denote it with $C(e, e')$. This is the case if either $\{a_e, b_e\} \cap \{a_{e'}, b_{e'}\} \neq \emptyset$, $b_e < a_{e'}$, $b_{e'} < a_e$, or  $a_{e} < a_{e'} \Leftrightarrow b_{e'} < b_e$. Otherwise, we have $a_e < a_{e'} < b_{e} < b_{e'}$ (or the same with $e$ and $e'$ swapped), in which case we define the cycle $C(e, e')$  to be the following:
\begin{itemize}
    \item If $a_e < a_{e'}$, then 
    \begin{equation} \label{eq:cycle_def}
        C(e, e') = a_e \to b_e \searrow a_{e'} \to b_{e'} \nearrow a_{e},
    \end{equation}
    where $b_e \searrow a_{e'}$ means ``go from $b_e$ to $a_{e'}$ along $H$ in the decreasing order of labels``, and $b_{e'} \searrow a_e$ is defined analogously in the increasing order (see Figure \ref{fig:C}).
    
    \item If $a_e > a_{e'}$, set $C(e, e') := C(e', e)$,
\end{itemize}
This definition of $C(e, e')$ will be important at one point in the proof of Lemma \ref{lemma:count_ham}.

\begin{figure}[h!]
\centering
\begin{tikzpicture}[scale=0.8]
    \def\r{3cm}        

    \foreach \i in {1,...,10}{
      \pgfmathparse{360/10*\i}
      \draw (\pgfmathresult : -\r) node[draw=black, circle, fill=black, inner sep=0, minimum size=3pt] (\i){};
    }
    \foreach \i in {1,...,10}{
      \pgfmathparse{360/10*\i}
      \draw (\pgfmathresult : -3.5cm) node {\i};
    }

    \draw[dashed] (0,0) circle (3cm);
    \draw[ultra thick] (2) -- (6);
    \draw[ultra thick] (4) -- (8);
    \draw[ultra thick] (0,0) +(7*360/10:3cm) arc (7*360/10:3*360/10:3cm);
    \draw[ultra thick] (0,0) +(9*360/10:3cm) arc (9*360/10:11*360/10:3cm);
\end{tikzpicture}
\caption{The cycle $C(e,e')$ where $e = (2,6)$ and $e' = (4,8)$.}
\label{fig:C}
\end{figure}

Given a subset $R \subseteq E(G) \setminus E(H)$, we denote with $\mathcal{C}(e, R)$ the set of all cycles coming from interactions of $e$ and chords in $R$:
$$
    \mathcal{C}(e, R) = \{ C(e, e') \colon e' \in R \setminus \{e\} \}.
$$
We denote with $\mathcal{C}(R)$ the set of all pairwise interactions of chords in $R$:
$$
    \mathcal{C}(R) = \{C(e, e') \colon e, e \in R, e \neq e' \}.
$$

\item We frequently use the notion of \emph{short-cutting}. Given an edge $e \in G$ and a cycle $C$ in a labelled $G$, such that vertices $a_e, \ldots, b_e$ appear in that order in $C$, \emph{short-cutting $C$ along $e$} refers to the cycle obtained from $C$ by simply removing vertices labelled $a_{e}+1, \ldots, b_{e}-1$ (this is indeed a cycle as $e$ connects $a_e$ and $b_e$).
\end{itemize}

\section{The Fingerprint lemma}

The following lemma is the key ingredient in the proof of results in Section \ref{sec:containers} which, in turn, are the main new components in the proof of Theorem \ref{thm:main}. We think of the set $F$ in the lemma as a \emph{fingerprint} -- it is a rather small set that is a good representative of the whole $R$. 

\begin{lemma} \label{lemma:compression}
    Let $G$ be a labelled graph with $n$ vertices such that $(1, 2, \ldots, n-1, n, 1)$ forms a Hamilton cycle, denoted by $H$. Suppose a set of chords $R \subseteq E(G) \setminus E(H)$ has the property that for each two distinct $e, e' \in R$, there is at most one chord $e'' \in R \setminus \{e, e'\}$ such that $|C(e, e')| = |C(e, e'')|$. If $|R| \ge C_0$, where $C_0$ is a sufficiently large constant, then there exists a subset of chords $F \subseteq R$ of size $|F| = \sqrt{|R|}$ such that  $|\cS(H + F)| \ge |R|/24$.
\end{lemma}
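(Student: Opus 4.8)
The plan is to translate the hypothesis on $R$ into a bounded-degree edge-colouring of the complete graph on $R$, and then to show that a random subset $F$ of size $\sqrt{|R|}$ meets a constant fraction of the colour classes, each of which contributes a distinct cycle length to $\cS(H+F)$.

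Concretely, write $m = |R|$ and, for each integer $\ell$, let $E_\ell$ be the graph on vertex set $R$ whose edges are the pairs $\{e,e'\}$ of distinct chords with $|C(e,e')| = \ell$. Since every pair of distinct chords determines exactly one value $|C(e,e')|$, the graphs $E_\ell$ partition the edge set of the complete graph on $R$, so $\sum_\ell |E_\ell| = \binom{m}{2}$. The key observation is that the hypothesis is exactly the statement that each $E_\ell$ has maximum degree at most $2$: a vertex of degree $3$ in some $E_\ell$, say $e$ with neighbours $e'_1,e'_2,e'_3$, would give the pair $(e,e'_1)$ two distinct ``partners'' $e'_2,e'_3$ with $|C(e,e'_1)|=|C(e,e'_2)|=|C(e,e'_3)|$, which is forbidden. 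A graph of maximum degree at most $2$ is a disjoint union of paths and cycles and hence contains a matching of size at least one-third of its number of edges; I would fix such a matching $M_\ell \subseteq E_\ell$ with $|M_\ell| \ge |E_\ell|/3$.

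Next I would pick $F \subseteq R$ uniformly at random among subsets of size $k = \sqrt{m}$, call $\ell$ \emph{realised} if $F$ contains both endpoints of some edge of $M_\ell$, and note that in that case $C(e,e')$ for the relevant pair uses only $e,e' \in F$ and edges of $H$, so $\ell \in \cS(H+F)$; since distinct $\ell$ are distinct lengths, $|\cS(H+F)|$ is at least the number of realised $\ell$. Because the edges of $M_\ell$ sit on pairwise disjoint vertex pairs, a standard negative-correlation estimate for uniformly random sets of fixed size (or a direct second-moment computation) gives
\[
    \Pr[\ell \text{ not realised}] \le \Bigl(1 - \tfrac{k(k-1)}{m(m-1)}\Bigr)^{|M_\ell|},
\]
and with $k=\sqrt m$ one has $\tfrac{k(k-1)}{m(m-1)} = \Theta(1/m)$ and $|M_\ell| \le m/2$, so the right-hand side is at most $e^{-\Omega(|M_\ell|/m)}$ and hence $\Pr[\ell \text{ realised}] = \Omega(|M_\ell|/m) = \Omega(|E_\ell|/m)$. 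By linearity of expectation the expected number of realised $\ell$ is $\Omega\bigl(\tfrac1m\sum_\ell |E_\ell|\bigr) = \Omega\bigl(\binom m2/m\bigr) = \Omega(m)$, and a careful bookkeeping of the constants (using $1-e^{-x}\ge \tfrac34 x$ on $[0,\tfrac12]$ and $\tfrac{k(k-1)}{m(m-1)}\ge\tfrac1{2m}$) pushes this past $m/24$. Thus some $F$ of size $\sqrt m$ realises at least $|R|/24$ lengths. For small $m$ the statement is trivial, since $|F|\ge 1$ already forces $|\cS(H+F)|\ge 2$.

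The main obstacle is twofold. First, one has to recognise that the somewhat opaque condition in the hypothesis is precisely a maximum-degree-$2$ condition on each length class; once that is in hand the remainder is routine. Second, the second-order (pairwise-collision) term in the second-moment estimate must not swamp the first-order term — and this is exactly why the fingerprint size is taken to be $\sqrt{|R|}$: with $|F|^2 \approx |R|$ the probability that a fixed pair falls inside $F$ is $\Theta(1/|R|)$, which is the regime where summing over the $\approx |R|^2/2$ pairs yields $\Theta(|R|)$ realised lengths while the collision correction stays under control.
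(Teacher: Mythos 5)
Your proof is correct and takes a genuinely different route from the paper. The paper's argument is a greedy one: it fixes an arbitrary $F_1$ of size $\sqrt{|R|}/2$, then for $\sqrt{|R|}/2$ rounds adds the chord $e$ that maximises the number of new lengths $|C(e,f)|$ with $f\in F_1$, and uses an auxiliary bipartite graph (chords vs.\ lengths) and an average-degree count to show each round contributes at least $|F_1|/6$ fresh lengths while $|Y|<|R|/3$. Your approach is probabilistic: you reinterpret the hypothesis as saying that for each integer $\ell$ the ``length-$\ell$'' graph $E_\ell$ on $R$ has maximum degree at most two (a clean and correct reformulation which the paper only uses implicitly), extract a matching $M_\ell$ with $|M_\ell|\ge |E_\ell|/3$, and show a uniformly random $\sqrt{|R|}$-subset $F$ hits some edge of $M_\ell$ with probability $\Omega(|M_\ell|/|R|)$, so that by linearity the expected number of realised lengths is $\Omega\bigl(\tbinom{|R|}{2}/|R|\bigr)=\Omega(|R|)$. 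The one place where you are a bit cavalier is the negative-correlation bound $\Pr[\ell\text{ not realised}]\le\bigl(1-\tfrac{k(k-1)}{m(m-1)}\bigr)^{|M_\ell|}$; this does hold (the indicators of pairwise-disjoint pairs being contained in a random $k$-set are negatively associated, via Joag-Dev--Proschan), but it is not as ``off the shelf'' as negative correlation of individual elements, and you should either cite that result explicitly or, better, run the Paley--Zygmund second-moment bound you mention in passing: with $N_\ell$ the number of edges of $M_\ell$ inside $F$, disjointness of the matching edges gives $\mathbb{E}[N_\ell^2]\le \mathbb{E}[N_\ell]+\mathbb{E}[N_\ell]^2$, so $\Pr[N_\ell>0]\ge \mathbb{E}[N_\ell]/(1+\mathbb{E}[N_\ell])\ge \tfrac23\mathbb{E}[N_\ell]$ once $\mathbb{E}[N_\ell]\le\tfrac12$ (which you verified), and the constants then land comfortably above $|R|/24$ for $|R|\ge 4$, with smaller $|R|$ trivial. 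Overall your argument is shorter and more conceptual than the paper's greedy/counting proof, at the price of a probabilistic tool; the paper's version has the (minor) advantage of being fully elementary and giving an explicit construction of $F$.
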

\begin{proof}
    Let $F_1 \subseteq R$ be an arbitrary subset of size $\sqrt{|R|}/2$. Set $Y = \mathcal{S}(H + F_1)$ and $F_2 = \emptyset$, and repeat the following $\sqrt{|R|}/2$ times: 
    Pick $e \in R \setminus (F_1 \cup F_2)$ which maximises $| \cS(\mathcal{C}(e, F_1)) \setminus Y|$, add it to $F_2$ and update $Y := Y \cup \cS(\mathcal{C}(e, F_1))$. We claim that if $|Y| < |R| / 6$ at the beginning of some iteration, then in that iteration $Y$ increases by at least $|F_1| / 6$. This implies that taking $F := F_1 \cup F_2$ at the end of the process, we have $|\cS(H + F)| \ge |\cS(\cC(F))| \ge |R|/24$.

    Consider sets $Y$ and $F_2$ at the beginning of some iteration, and suppose $|Y| < |R|/3$. Let $B$ be an auxiliary bipartite graph with vertex sets $R$ and $\mathbb{N} \setminus Y$, and an edge between $e$ and $x$ if there exists $f \in F_1$ such that $|C(e, f)| = x$. Note that the vertices corresponding to $F_1 \cup F_2$ have no incident edges in $B$. Indeed, if $e \in F_1$ and $f \in F_1$ then $|C(e,f)| \in Y$ by the way we initialised $Y$; if $e \in F_2$ and $f \in F_1$, then $|C(e,f)|$ was added to $Y$ in the round where we added $e$ to $F_2$. For an $f \in F_1$, by the assumption of the lemma each $y \in Y$ can be realised by at most two cycles $C(e,f)$ and $C(e',f)$ with $e,e' \in R$. Therefore, there are at least $|R|- 1 - 2|Y| \ge |R|/3$ chords $e \in R \setminus \{f\}$ such that $|C(e, f)| \not \in Y$. As we have just observed, these chords have to lie outside of $F_1 \cup F_2$. Again using the assumption of the lemma, for each $e \in R$ and $f \in F_1$, there is at most one other $f' \in F_1 \setminus \{f\}$ such that $|C(e, f)| = |C(e, f')|$. Therefore, $B$ contains at least $|F_1| |R| / 6$ edges. The quantity $| \cS(\mathcal{C}(e, F_1)) \setminus Y|$ corresponds to the degree of $e$ in $B$. As we pick a chord which maximises this value, it is at least as large as the average degree of vertices from $R$ in the graph $B$, which is at least $|F_1| / 6$. This verifies the claim.
\end{proof}

\section{Container lemmata} \label{sec:containers}

Lemmata presented in this section are the key new ingredients in the proof of Theorem \ref{thm:main}. Informally, they show there exists a small family of large sets which are unavoidable by cycle sets. That is, every graph with certain properties has to contain one of these sets as a subset of its cycle set. We start with the easier of the two results.

\begin{lemma} \label{lemma:container_maxdeg}
    Given $n$ and $p \ge \log^3 n$, there exists a family $\mathcal{F}'(n,p)$ of subsets of $\{1, \ldots, n\}$ such that
    \begin{equation} \label{eq:bound_F_prime}
        \sum_{S \in \mathcal{F}'(n,p)} 2^{-|S|} = 2^{-\Omega(p)},
    \end{equation}
    with the property that if $G$ is a Hamiltonian graph with $n$ vertices and maximum degree at least $p$, then $S \subseteq \mathcal{S}(G)$ for some $S \in \mathcal{F}'(n,p)$.
\end{lemma}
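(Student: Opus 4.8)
The plan is to exploit the large-degree vertex together with the Hamilton cycle to manufacture many cycles of controlled, distinct lengths, and then package the resulting length set into a container family via the fingerprint lemma. First I would fix a Hamilton cycle $H$ of $G$ on $\{1,\ldots,n\}$ (relabelling so that $H = (1,2,\ldots,n,1)$), and let $v$ be a vertex of degree at least $p$. Let $u_1,\ldots,u_k$ with $k \ge p-2$ be the neighbours of $v$ other than its two $H$-neighbours, so each edge $\{v,u_i\}$ is a chord; call this set of chords $R_v$. For any two chords $\{v,u_i\},\{v,u_j\}$ sharing the endpoint $v$, the cycle $C(e,e')$ is just the $v$-to-$v$ path along $H$ through one arc plus the two chords, whose length is essentially determined by the positions of $u_i,u_j$ on $H$; by choosing the arc of $H$ that does \emph{not} pass through $v$, the length of $C(\{v,u_i\},\{v,u_j\})$ is $|u_i - u_j| + 2$ (cyclic distance), so distinct unordered pairs at distinct cyclic distances give distinct cycle lengths. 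Hence the ``at most one collision'' hypothesis of Lemma~\ref{lemma:compression} is easy to arrange: after discarding a bounded fraction of the $u_i$ (e.g.\ keeping only those in suitably spread-out residue classes, or greedily selecting an $\Omega(p)$-size subset with all pairwise cyclic distances distinct), we obtain a set $R \subseteq R_v$ of size $\Omega(p)$ satisfying the hypothesis.

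Next I would apply Lemma~\ref{lemma:compression} to this $R$: it yields a fingerprint $F \subseteq R$ with $|F| = \sqrt{|R|}$ and $|\cS(H+F)| \ge |R|/24 = \Omega(p)$, and crucially $\cS(H+F) \subseteq \cS(G)$. The container family $\cF'(n,p)$ is then defined to be the collection of all sets $\cS(H'+F')$ ranging over all choices of a Hamilton-cycle labelling $H'$ on $\{1,\ldots,n\}$ — which we may as well normalise to the canonical $H = (1,\ldots,n,1)$ — and all chord sets $F'$ of size at most $\sqrt{p}$ incident to a single vertex, together with the crude knowledge of which $\Omega(p)$-subset of $\{1,\ldots,n\}$ the guaranteed large cycle-length set lives in. More economically: for each choice of the $\le \sqrt{p}$ ``fingerprint chords'' (there are at most $\binom{n}{\sqrt p}\cdot n$-ish such choices, i.e.\ $n^{O(\sqrt p)} = 2^{O(\sqrt p \log n)}$ of them), the set $S = \cS(H+F')$ is determined and has size $\Omega(p)$; then
\begin{equation*}
    \sum_{S \in \cF'(n,p)} 2^{-|S|} \le 2^{O(\sqrt p \log n)} \cdot 2^{-\Omega(p)} = 2^{-\Omega(p)},
\end{equation*}
where the last equality uses $p \ge \log^3 n$ so that $\sqrt p \log n = o(p)$ (indeed $\sqrt p \log n \le p / \log^{1/2} n \ll p$). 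This is exactly the bound~\eqref{eq:bound_F_prime}, and by construction every Hamiltonian $G$ with max degree $\ge p$ produces, via the above, some $S \in \cF'(n,p)$ with $S \subseteq \cS(G)$.

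The main obstacle, and the only place needing care, is the bookkeeping of \emph{which} sets go into $\cF'(n,p)$ so that the count of fingerprints is genuinely only $2^{O(\sqrt p \log n)}$ rather than something larger: one must be careful that the large-degree vertex $v$, the selection of the well-spread subset $R \subseteq R_v$, and the run of the greedy process inside Lemma~\ref{lemma:compression} are all functions of data that can be encoded in $O(\sqrt p \log n)$ bits — and indeed they can, since the fingerprint $F$ (a set of $\sqrt{|R|} \le \sqrt p$ chords, hence $O(\sqrt p \log n)$ bits) already determines $\cS(H+F)$ outright, so no further information about $v$, $R$, or the process is needed in the container itself. A secondary point is verifying the ``at most one collision'' hypothesis cleanly for the star $R_v$: here one should note $|C(\{v,u_i\},\{v,u_j\})|$ depends only on the cyclic distance between $u_i$ and $u_j$ along $H$, so for fixed $u_i$ the lengths $|C(\{v,u_i\},\{v,u_j\})|$, $j \ne i$, are all distinct, which is even stronger than required; one then extracts $R$ by an easy greedy argument (or a random residue-class choice) so that across \emph{all} pairs there are no repeated lengths, losing only a constant factor in $|R|$. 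With these two points dispatched, the verification of~\eqref{eq:bound_F_prime} and of the covering property is routine.
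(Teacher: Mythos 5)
Your proposal follows exactly the paper's route: take the max-degree vertex $v$, let $R$ be the star of chords at $v$, apply Lemma~\ref{lemma:compression} to extract a fingerprint $F$ of size $\sqrt{|R|}$ with $|\cS(H+F)| = \Omega(p)$, and define $\cF'(n,p)$ as the collection of such sets $\cS(H+F)$, counted by $\binom{n}{2}^{\sqrt{p}} = 2^{O(\sqrt{p}\log n)}$, which is absorbed using $p \ge \log^3 n$. The container bookkeeping and the final estimate are the same as in the paper.

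There are, however, two small errors in your verification of the ``at most one collision'' hypothesis of Lemma~\ref{lemma:compression}, and they are worth flagging because one of them would be a genuine gap if it were actually needed. First, the claim that for fixed $u_i$ the lengths $|C(\{v,u_i\},\{v,u_j\})|$, $j \ne i$, are all distinct is false: normalising $v=1$, the unique cycle through both chords has length $|u_i - u_j| + 2$, and the two indices $u_j$ and $2u_i - u_j$ (when both are present) sit at the same distance from $u_i$ and give equal lengths. What is true --- and is exactly what the hypothesis requires --- is that for each fixed pair $u_i, u_j$ there is \emph{at most one} other $u''$ with $|u_i - u''| = |u_i - u_j|$, namely $u'' = 2u_i - u_j$. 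So the full star $R$ already satisfies the hypothesis of Lemma~\ref{lemma:compression}, and no pruning is needed; the paper applies the lemma directly. Second, the pruning alternative you propose, ``greedily selecting an $\Omega(p)$-size subset with all pairwise cyclic distances distinct,'' cannot be carried out once $p \gg \sqrt{n}$: such a subset is a Sidon set, and Sidon subsets of $\{1,\ldots,n\}$ have size at most $(1+o(1))\sqrt{n}$. Since the lemma is stated for all $p$ up to roughly $n$, this would be a real obstruction if that step were needed --- fortunately it is not, once the ``at most one collision'' observation above is used in place of a distinct-distances requirement.
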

\begin{proof}
    Without loss of generality, we may assume that $G$ a labelled graph such that $(1,2,\ldots,n-1,n,1)$ forms a Hamiltonian cycle, which we denote with $H$. Let $R \subseteq E(G) \setminus E(H)$ be the set of chords incident to a vertex $v$ with the largest degree. Then $|R| \ge p-2$. We may assume $|R| = p-2$ (otherwise remove arbitrary edges from $R$) and $v = 1$. One easily checks that $R$ satisfies the assumption of Lemma \ref{lemma:compression}, thus we can apply it and obtain a subset $F \subseteq R$ of size $|F| = \sqrt{|R|}$ such that $|\mathcal{S}(H + F)| \ge |R|/24 = (p-2)/24$.

    With the previous discussion in mind, let $\mathcal{G}(n,p)$ consist of all graphs on the vertex set $\{1, \ldots, n\}$ of the form $H + F$, where $H$ is the Hamilton cycle $(1, 2, \ldots, n-1, n, 1)$, $|F| = \sqrt{p-2}$ and $|\mathcal{S}(H + F)| \ge (p-2)/24$. As just shown, for every Hamiltonian graph $G$ with a vertex of degree at least $p$ there exists $G' \in \mathcal{G}(n,p)$ such that $G' \subseteq G$, and so $\mathcal{S}(G') \subseteq \mathcal{S}(G)$. Therefore, we can set $\mathcal{F}'(n,p) = \mathcal{F}(\mathcal{G}(n,p))$. The bound \eqref{eq:bound_F_prime} follows from $|\mathcal{F}'(n,p)| \le |\mathcal{G}(n,p)| \le \binom{n}{2}^{\sqrt{p}}$, $|S| \ge (p-2)/24$ for every $S \in \mathcal{F}'(n,p)$, and $p \ge \log^3 n$ (with room to spare).
\end{proof}

The next result applies to a much broader class of Hamiltonian graphs. The proof is also significantly more involved.

\begin{lemma} \label{lemma:count_ham}
    Given a sufficiently large $n$ and $p \ge \log^9 n$, there exists a family $\mathcal{F}(n,p)$ of subsets of $\{1, \ldots, n\}$ such that
    \begin{equation} \label{eq:container}
        \sum_{S \in \mathcal{F}(n,p)} 2^{-|S|} = 2^{-\Omega(\sqrt{p} / \log n)},
    \end{equation}
    with the property that if $G$ is a Hamiltonian graph with $n$ vertices and at least $n + p$ edges, then $S \subseteq \cS(G)$ for some $S \in \mathcal{F}(n,p)$.
\end{lemma}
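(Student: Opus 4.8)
The plan is to reduce, via a case analysis, the situation of a Hamiltonian graph with $n + p$ edges (hence at least $p$ chords with respect to the chosen Hamilton cycle $H$) to a situation where we can apply Lemma~\ref{lemma:compression}. The obstruction to applying that lemma directly is that the hypothesis requires a set $R$ of chords such that for every pair $e, e' \in R$ there is at most one further $e'' \in R$ with $|C(e,e')| = |C(e,e'')|$. A generic set of $p$ chords need not have this ``small multiplicity'' property. So the first step is a cleaning/regularization step: starting from the $\ge p$ chords, extract a subset $R$ of size $\Omega(p)$ — or possibly $\Omega(p/\mathrm{polylog}\,n)$, which is where the $\log n$ loss in \eqref{eq:container} will come from — on which the multiplicity condition holds. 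I expect this to be done by a bucketing argument: group chords according to some coarse feature of their endpoints (e.g.\ the approximate length $b_e - a_e$, or the approximate position of $a_e$, discretized into $O(\log n)$ dyadic scales), and argue that within an appropriate bucket, or after a greedy deletion that removes few chords, no value $x = |C(e,f)|$ is attained too many times. The precise combinatorics of $C(e,e')$ laid out in the Definitions section (the four ``nice'' configurations versus the single ``crossing'' configuration where $a_e < a_{e'} < b_e < b_{e'}$) will be used here: for a fixed $e$, the function $e' \mapsto |C(e,e')|$ restricted to chords in a controlled scale should be close to injective, and the remark that the definition of $C(e,e')$ ``will be important at one point'' signals that the crossing case is exactly the one needing care.

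Once such an $R$ with $|R| = \Omega(p / \mathrm{polylog}\,n)$ is obtained and the vertices are relabelled so that $H = (1, 2, \ldots, n, 1)$, the second step is to invoke Lemma~\ref{lemma:compression} to get a fingerprint $F \subseteq R$ with $|F| = \sqrt{|R|}$ and $|\cS(H + F)| \ge |R| / 24 = \Omega(p / \mathrm{polylog}\,n)$. The third step mirrors the proof of Lemma~\ref{lemma:container_maxdeg}: define $\mathcal{F}(n,p)$ to be the collection of cycle sets $\cS(H+F)$ over all choices of Hamilton cycle $H$ on $\{1,\ldots,n\}$ (which we may fix to be the canonical one) and all chord sets $F$ with $|F| = \sqrt{|R|}$ and $|\cS(H+F)|$ at least the guaranteed size. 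Every Hamiltonian $G$ with $n + p$ edges then contains some $S = \cS(H+F) \in \mathcal{F}(n,p)$ with $S \subseteq \cS(G)$, since $H + F$ is a subgraph of $G$ after relabelling. The counting bound \eqref{eq:container} follows from $|\mathcal{F}(n,p)| \le \binom{n}{2}^{\sqrt{|R|}} = 2^{O(\sqrt{p}\,\log n / \sqrt{\mathrm{polylog}\,n})}$ against the exponential saving $2^{-|S|}$ with $|S| = \Omega(p/\mathrm{polylog}\,n)$; balancing these, the $2^{\Theta(\sqrt{p}\log n)}$ upper bound on the number of fingerprints is dominated by the $2^{-\Omega(p/\mathrm{polylog}\,n)}$ from the size of each set, leaving a net $2^{-\Omega(\sqrt{p}/\log n)}$ once one is careful about the polylog factors — the hypothesis $p \ge \log^9 n$ is what makes this work with room to spare.

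The main obstacle is the first step: producing the subset $R$ satisfying the multiplicity hypothesis of Lemma~\ref{lemma:compression} while retaining $|R| = \Omega(p / \mathrm{polylog}\,n)$ chords. One cannot simply take all chords, since, e.g., a ``fan'' of chords from a single vertex or a set of parallel-length chords can make many $C(e,e')$ coincide. The natural approach is: (i) partition chords into $O(\log n)$ classes by the dyadic scale of some geometric parameter; (ii) pass to the largest class, losing only a $\log n$ factor; (iii) within that class, argue that for each fixed $e$, the values $|C(e, e')|$ have bounded multiplicity, possibly after removing a further $o(|R|)$ chords via a greedy/defect argument; one shows that if three chords $e', e'', e'''$ all give the same value with $e$, then their endpoints satisfy rigid linear relations that, within a single dyadic class, can occur only $O(1)$ times. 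Handling the crossing configuration \eqref{eq:cycle_def} — where $|C(e,e')| = (a_{e'} - a_e) + (b_e - b_{e'}) + 2 + (\text{an arc detour})$, or whatever the exact expression works out to — is the delicate part, because there the length is a difference of endpoint coordinates rather than a simple sum, so two very different chords can collide; but restricting to a common scale of $b_e - a_e$ should tame this. If the polylog loss in this step can be pushed down to a single $\log n$, the stated bound \eqref{eq:container} follows.
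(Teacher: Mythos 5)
There is a decisive gap in Step~1. Your plan is to extract, from the $\ge p$ chords, a single subset $R$ of size $\Omega(p/\mathrm{polylog}\,n)$ satisfying the multiplicity hypothesis of Lemma~\ref{lemma:compression}, and then invoke that lemma to get a container $S$ with $|S|\ge |R|/24=\Omega(p/\mathrm{polylog}\,n)$. This is impossible in general: the paper itself notes that attaching a clique on $\sqrt p$ consecutive vertices to a Hamilton cycle gives $G$ with $n+\Theta(p)$ edges but $|\cS(G)|=\Theta(\sqrt p)$, so no $S\subseteq\cS(G)$ of size $\omega(\sqrt p)$ exists at all, and a fortiori no such $R$ exists (Lemma~\ref{lemma:compression} would force $|\cS(G)|\ge |R|/24$). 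The bucketing/greedy cleaning you propose also cannot fix the ``parallel-length'' obstruction you yourself flag: if $R_0$ consists of $\Theta(p)$ pairwise independent chords all with the same span $b_e-a_e=L$, then for \emph{every} pair $e,e'\in R_0$ one has $|C(e,e')|=n-2(L-1)$, and restricting to a single dyadic scale of $b_e-a_e$ leaves this configuration untouched; no subset of size bigger than $3$ satisfies the hypothesis of Lemma~\ref{lemma:compression}. The same problem recurs in the nested configuration, where $|C(e,e')|=2+(b_e-a_e)-(b_{e'}-a_{e'})$ depends only on the difference of spans. So the regularization step, which is the load-bearing part of your outline, has no valid implementation, and its conclusion ($|S|=\Omega(p/\mathrm{polylog}\,n)$) overshoots what is even true.

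The paper's proof is structured precisely around these obstructions and uses Lemma~\ref{lemma:compression} only in a narrower role. It splits Hamiltonian graphs with $\ge n+p$ edges into two classes. If $G$ has $\ge\sqrt p/(K\log n)$ pairwise independent chords, it does not apply Lemma~\ref{lemma:compression} at all: it encodes the multiset of chord spans and observes that sub-multisets of shortcuts give distinct cycle lengths (when spans repeat, the count of chords of each span suffices; when spans are distinct, nested partial sums give $\Omega(\sqrt p)$ distinct totals). Otherwise, it pigeonholes into a dyadic span class to get $|R|\ge p/\log n$, takes a \emph{maximum} independent set $I$ inside $R$ (small by assumption), finds a single point $x$ pierced by $|X|\ge|R|/(2|I|)$ chords, and applies Erd\H{o}s--Szekeres to the nesting order on $X$. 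Only when $X$ contains a long chain or antichain does it invoke Lemma~\ref{lemma:compression}; the set $Y$ it feeds in is of size $\sqrt p/(K\log n)$, not $\Omega(p/\mathrm{polylog}\,n)$, which matches the clique example. In the remaining case (short chain, short antichain, short $I$, but large product $|A||I|$) it uses the Milans--Pfender--Rautenbach--Regen--West shifting argument, shortcutting cycles of $\cC(f,A)$ by prefixes of $I$ to produce $\Omega(|A||I|)=\Omega(\sqrt p)$ distinct lengths. These two mechanisms (subset sums over independent chords, and shifting an antichain by an independent set) are precisely what your proposal is missing, and they cannot be replaced by a single application of Lemma~\ref{lemma:compression}.
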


Note that there exists a Hamiltonian graph $G$ with $n$ vertices and $n+p$ edges such that $|\cS(G)| = \Theta(\sqrt{p})$. For example, start with a Hamilton cycle, choose $\sqrt{p}$ consecutive vertices and add all possible edges between them. Therefore, the property \eqref{eq:container} is the best possible up to the $\log n$ factor in the exponent.

The proof of Lemma \ref{lemma:count_ham} combines ideas of Milans, Pfender, Rautenbach, Regen, and West \cite{milans12spectra}, as well as some ideas of Verstra\"{e}te \cite{verstraete04cycles}, with Lemma \ref{lemma:compression}. The overall theme in the proof is showing that given a graph $G$, one can produce a succinct encoding $\Phi(G)$ and, from it, a large set $\phi(\Phi(G)) \subseteq \mathbb{N}$ with the property $\phi(\Phi(G)) \subseteq \mathcal{S}(G)$. 

\begin{proof}[Proof of Lemma \ref{lemma:count_ham}]
    Let us denote with $\mathcal{H}(n,p)$ the family of all Hamiltonian graphs with $n$ vertices and at least $n + p$ edges. We can suppose that all the graphs in $\mathcal{H}(n,p)$ are labelled and that $(1,2,\ldots,n-1,n,1)$ forms a Hamilton cycle, denoted by $H$. We partition $\cH(n,p)$ into subfamilies $\mathcal{H}_1$ and $\mathcal{H}_2$, and construct a separate family $\mathcal{F}_i$ for each. Throughout the proof, we fix $K$ to be a sufficiently large constant.

    \paragraph{Graphs with many independent chords.} Given a graph $G \in \mathcal{H}(n,p)$, we say that a subset $R \subseteq E(G) \setminus E(H)$ of chords is \emph{independent} if there exists an ordering $e_1, \ldots, e_{|R|}$ of the edges in $R$ such that $b_{e_i} < a_{e_{i+1}}$ for every $i \in \{1, \ldots, |R| - 1\}$. We think of the chords in $R$ as being independent as one can decide for each chord, independently of all other chords, whether it is used to shortcut $H$. We now define $\cH_1$ as follows:
    $$
        \cH_1 = \biggl\{ G \in \mathcal{H}(n,p) \colon \text{there exists an independent } R \subseteq E(G) \setminus E(H) \text{ of size } |R| \ge \sqrt{p} / (K \log n)
        \biggr\}.
    $$
    We show that there exists a mapping
    $$
        \Psi \colon \cH_1 \to (\{0,\ldots,n-1\} \times \{0, \ldots, n-1\})^{t},
    $$
    where $t = p^{1/4}$, with the following properties:
    \begin{enumerate}
        \item Consider some $\mathbf{a} = ((n_1, d_1), \ldots, (n_t, d_t)) \in \Psi(\cH_1)$, and let $D(\mathbf{a})$ be the multiset consisting of $n_i$ many elements $d_i$, for each $i \in [t]$. Then the set
        $$
            \cS(\mathbf{a}) = \left\{ n - \sum_{d \in D'} d \colon D' \subseteq D(\mathbf{a}) \right\} 
        $$
        is of size at least $\sqrt{p} / (K \log n)$.
        \item $\cS(\Psi(G)) \subseteq \cS(G)$ for every $G \in \cH_1$.
    \end{enumerate}
    Having such a mapping, we simply take $\mathcal{F}_1 = \{ \mathcal{S}(\mathbf{a}) \colon \mathbf{a} \in \Psi(\cH_1)\}$ which clearly satisfies desired properties.

    Let us now show how to construct $\Psi$. Consider some $G \in \cH_1$ and let $R \subseteq E(G) \setminus E(H)$ be a set of independent chords of size $|R| = \sqrt{p} / (K \log n)$. If we shortcut $H$ using a chord $e \in R$, we leave out exactly $b_e - a_e - 1$ vertices. More generally, if we start with $H$ and then successively shortcut it using chords from some $R' \subseteq R$, in an arbitrary order, we end up with a cycle of size
    $$
        n - \sum_{e \in R'} (b_e - a_e - 1).
    $$    
    We stress that the order in which we shortcut, as well as the actual values $a_e, b_e$ for chords in $R'$, is irrelevant for the final outcome. Let $D = \{b_e - a_e - 1 \colon e \in R\}$. 
    
    \begin{itemize}
    \item \textbf{Case 1: $|D| \le p^{1/4}$.} Let $d_1, \ldots, d_{|D|}$ be an arbitrary ordering of the elements of the set $D$, and for each $d_i$ let $n_i$ denote the number of chords $e \in R$ such that $b_e - a_e  -  1= d_i$. Set
    $$
        \Psi(G) := ((n_1, d_1), \ldots, (n_{|D|}, d_{|D|}), (0,0), \ldots, (0,0)),
    $$
    where $(0,0)$ is padded $t - |D|$ times (this is just a technicality to make $\Psi$ well defined). Using the observations about short-cutting $H$ with a subset of chords $R' \subseteq R$, we conclude
    $$
        \mathcal{S}(\Psi(G)) = \left\{ n - \sum_{e \in R'} (b_e - a_e - 1) \colon R' \subseteq R \right\} \subseteq \cS(G).
    $$
    From the second set and $b_e \ge a_e + 2$, as otherwise $e$ is not a chord, one easily observes $|\mathcal{S}(\Psi(G))| \ge |R|$.
    
    \item \textbf{Case 2: $|D| > p^{1/4}$.} Choose a subset $F \subseteq R$ of size $|F| = p^{1/4}$ such that $b_e - a_e \neq b_{e'} - a_{e'}$ for distinct $e, e' \in F$. Let $d_1 > \ldots > d_{|F|}$ be the descending ordering of the values $b_e - a_e - 1$ for $e \in F$, and set
    $$
        \Psi(G) := ((1, d_1), \ldots, (1, d_{|F|})).
    $$
    The same reasoning as in the previous case shows $\cS(\Psi(G)) \subseteq \cS(G)$. To see that $\cS(\Psi(G))$ is sufficiently large, for each integer $1 \le q \le |F|$ consider all the sums of the form
    $$
        d_1 + \ldots + d_q + d_i
    $$
    for $i > q$. Then all these sums are different, and there are $\binom{|F|)}{2} = \Omega(\sqrt{p})$ many of them. Note that this is in fact a stronger lower bound than required.
    \end{itemize}

    \paragraph{Graphs with rich chords interaction.} Let $\cH_2 = \cH(n,p) \setminus \cH_1$. We show that there exists a mapping
    $$
        \Phi \colon \cH_2 \to \cG(n),
    $$
    where $\cG(n)$ denotes the family of graphs on the vertex set $\{1, \ldots, n\}$, with the following properties for every $F \in \Phi(\cH_2)$:
    \begin{enumerate}
        \item $|\cS(H + F)| \ge 2 |F| \log n + \Omega( \sqrt{p} / \log n)$, and
        \item If $F = \Phi(G)$, for some $G \in \cH_2$, then $F \subseteq G$.
    \end{enumerate}
    Set $\mathcal{F}_2 = \{\mathcal{S}(H + F) \colon F \in \Phi(\cH_2)\}$. As $H + \Phi(G) \subseteq G$, and consequently $\cS(H + F) \subseteq \cS(G)$, we just need to verify \eqref{eq:container}:
    $$
        \sum_{S \in \mathcal{F}_2} 2^{-|S|} \le \sum_{f = 0}^{\binom{n}{2}} \binom{n}{2}^f 2^{-2f \log n - \Omega(\sqrt{p}/ \log n)} = 2^{-\Omega(\sqrt{p} / \log n)}.
    $$
    
    Consider some $G \in \cH_2$. By the pigeon-hole principle, there exist a set of chords $R \subseteq E(G) \setminus E(H)$ of size $|R| \ge p / \log n$ such that 
    \begin{equation} \label{eq:chord_length}
        L \le b_e - a_e < 2L
    \end{equation}
    for every $e \in R$, where $L$ is some power of two. Let $I \subseteq R$ denote a largest subset of chords which is independent, that is, there is an ordering $e_1, \ldots, e_{|I|}$ of the chords in $I$ such that $b_{e_i} < a_{e_{i+1}}$ for $1 \le i < |I|$. In case there are multiple such largest sets, choose one which minimises
    \begin{equation} \label{eq:minimise}
        \sum_{e \in I} b_e - a_e.
    \end{equation}
    For each $i \in \{1, \ldots, |I|\}$, let $X_i \subseteq R$ be the set consisting of all $e \in R$ such that $a_{e} \le a_{e_i} \le b_e$ or $a_e \le b_{e_i} \le b_e$. As $I$ minimises \eqref{eq:minimise}, each chord belongs to at least one set $X_i$. Let $\ell \in \{1, \ldots, |I|\}$ denote the index of a largest set $X_\ell$, and note that the previous observation implies $|X_\ell| \ge |R| / |I|$. Choose $x \in \{a_{e_\ell}, b_{e_\ell}\}$ such that at least $|X_\ell|/2$ chords $e \in F_\ell$ satisfy $a_e \le x \le b_e$. Let us denote such chords with $X$. Define a relation $\preccurlyeq$ on $X$ as $e \preccurlyeq e'$ if $a_e \le a_{e'}$ and $b_{e'} \le b_{e}$, and note that $(X, \preccurlyeq)$ is a partially ordered set. If $e \preccurlyeq e'$, then we say $e$ and $e'$ are ``parallel'', and otherwise they are ``intersecting''.

    \begin{itemize}
    \item \textbf{Case 1: Many parallel or intersecting chords.} Suppose $X$ contains a chain or an anti-chain $Y$ of size $|Y| = \sqrt{p} / (K \log n)$. In both cases, the set of chords $Y$ satisfies the assumption of Lemma \ref{lemma:compression}. This is easily seen from the definition of $C(e, e')$, and perhaps the only thing worth pointing out is that if $Y$ is an anti-chain, then it is crucial here that $a_e \le x \le b_e$ for every $e \in X$, which forces $a_e < a_{e'} < b_e < b_{e'}$ for $e, e' \in Y$ such that $a_e < a_{e'}$.  Let $F \subseteq Y$ be a subset given by Lemma \ref{lemma:compression}: $|F| = \sqrt{|Y|}$ and $|\cS(H + F)| \ge |Y|/24$. Set $\Psi(G) = F$ (the first property of $\Psi$ follows from the lower bound on $p$).

    \item \textbf{Case 2: Rich combination of intersecting chords with $I$.} Let $Z \subseteq X$ be a largest chain in $X$. By the Erd\H{o}s-Szekeres theorem, there exists an anti-chain $A \subseteq X$ of size at least $|X|/|Z|$. That is, no two chords in $A$ are $\preccurlyeq$-comparable. Again, we stress that in this case we have $a_e < a_{e'} < b_e < b_{e'}$ for every $e, e' \in A$ such that $a_e < a_{e'}$. As the assumption of the previous case is not satisfied, we have $|Z|,|A| < \sqrt{p} / (K \log n)$. From $G \not \in \cH_1$ we have $|I| < \sqrt{p} / (K \log n)$. We show that $|\cS(H + A + I)| \ge \sqrt{p}$, which allows us to set $\Phi(G) = A + I$. In a sense, this is technically the easier of the two cases as it does not use Lemma \ref{lemma:compression}.
    
    From $|A||Z| \ge |X|$, $|X| \ge |R|/(2|I|)$, and the upper bound on $|Z|$, we conclude
    \begin{equation} \label{eq:AI_product}
        |A||I| \ge \frac{K}{2} \sqrt{p}.
    \end{equation}
    As $K$ is sufficiently large, it suffices to show, say, $|\cS(H + A + I)| \ge |A||I|/12$.
    
    Consider $f \in A$ with the smallest $a_f$. Then all the cycles in $\cC(f, A)$ have different size, and so $| \cS(\cC(f, A))| = |A|-1$. Moreover, by the assumption \eqref{eq:chord_length} and the definition \eqref{eq:cycle_def}, we have
    $$
        \cS(\cC(f, A)) \subseteq \{n - 4L, \ldots, n\}. 
    $$
    The main idea from Milans et al.~\cite{milans12spectra} is that one can use chords from $I$ to  ``shift'' sizes of cycles in $\cC(f, A)$ and construct new cycles of different sizes. Owing to \eqref{eq:chord_length}, for every $e \in A$ we have $b_e < a_{e_i}$ for $i \ge \ell + 3$, and $b_{e_j} < a_f$ for $j \le \ell - 3$. That means we can use any subset $S \subseteq \{e_1, \ldots, e_{\ell-3}, e_{\ell+3}, \ldots, e_{|I|} \} := I'$ to shortcut any cycle from $\cC(f, A)$. Let $S_1 \subset S_2 \subset \ldots \subset S_k \subseteq I'$, $k = \lfloor |I'| / 5 \rfloor > |I|/6$ (using that $I$ is sufficiently large, which follows from \eqref{eq:AI_product}), be arbitrary sets such that $|S_1| = 5$ and $|S_i \setminus S_{i-1}| = 5$ for each $i \in \{2, \ldots, k\}$. By short-cutting every $\cC(f, A) =: \cC_0$ using $S_1$, we obtain a set of cycles $\cC_1$ such that
    $$
        \cS(\cC_1) \subseteq \{n - 4L - d_1, n - d_1\},
    $$
    where $d_1 = \sum_{s \in S_1} b_s - a_s - 1 > 4L$. This implies
    $$
        \cS(\cC_1) \cap \cS(\cC_0) = \emptyset.
    $$
    Moreover, short-cutting two cycles of different size using $S_1$ produces two new cycles, again of different sizes. Therefore, $|\cS(\cC_1)| = |A| - 1$.

    Repeating the previous idea, short-cutting cycles in $\cC_0$ using $S_i$ we obtain a set of cycles $\cC_i$ such that
    $$
        \cS(\cC_i) \subseteq \{n - 4L - d_i, n - d_i\},
    $$
    where $d_i = \sum_{s \in S_i} b_s - a_s - 1$. For $0 \le i < j \le k$ we have $d_i < d_j - 4L$ (where $d_0 := 0$), thus
    $$
        \cS(\cC_i) \cap \cS(\cC_j) = \emptyset.
    $$
    Since $|\cS(\cC_i)| = |A|-1$, we conclude 
    $$
        |\cS(H + A + I)| \ge (k+1)(|A| - 1) > |I||A|/12,
    $$
    with some room to spare.
    \end{itemize}
\end{proof}

\section{Proof of Theorem \ref{thm:main}}

The proof of Theorem \ref{thm:main} follows the approach of Verstra\"{e}te \cite{verstraete04cycles} of reducing the problem to counting cycle sets of graphs containing a large induced Hamiltonian subgraph with many chords or a large maximum degree. The latter is taken care of by lemmata from Section \ref{sec:containers}. This is the part that is done more efficiently than in \cite{verstraete04cycles}, and is the sole source of the improvement.

\begin{proof}[Proof of Theorem \ref{thm:main}]
The quantity we are interested in is the size of the family $\cS(\mathcal{G}(n))$, where $\mathcal{G}(n)$ denotes the family of all non-isomorphic $n$ vertex labelled graphs (that is, graphs on the vertex set $\{1, \ldots, n\}$). Following Verstra\"{e}te \cite{verstraete04cycles}, consider the following families of graphs:
\begin{align*}
    \mathcal{G}_1 &= \{G \in \mathcal{G}(n) \colon \text{ a largest cycle in $G$ is of size at most } n - \sqrt{n}/(4 \log n)\}, \\[1em]
    \mathcal{G}_2 &= \{G \in \mathcal{G}(n) \setminus \mathcal{G}_1 \colon G \text{ has at most } n + n / (4 \log n) \text{ edges}\}, \\[1em]
    \mathcal{G}_3 &= \biggl\{G \in \mathcal{G}(n) \colon \parbox{7.2cm}{$G$ contains an induced Hamiltonian subgraph \\ 
    with maximum degree at least $\sqrt{n} / 4 $} \biggr\},  \\[1em]
    \mathcal{G}_4 &= \biggl\{G \in \mathcal{G}(n) \colon \parbox{7.7cm}{$G$ contains an induced Hamiltonian subgraph $G'$ \\
    with at least $v(G') + n / (8 \log n)$ edges} \biggr\}.
\end{align*}
We first show $\cG(n) = \cG_1 \cup \cG_2 \cup \cG_3 \cup \cG_4$. Consider some $G \in \cG(n) \setminus (\cG_1 \cup \cG_2)$. Then $G$ contains a cycle $C = (c_1, c_2, \ldots, c_\ell)$ of size $\ell \ge n - \sqrt{n} / (4 \log n)$, and has at least $n + n / (4 \log n)$ edges. Suppose there exists a vertex $v \in V(G) \setminus V(C)$ with at least $\sqrt{n} / 4$ neighbours in $C$. Let $i$ denote the smallest index of a vertex $c_i$ which is neighbour of $v$, and $j$ the largest index. Then $C' = (v, c_i, \ldots, c_j)$ forms a cycle, and $v$ has degree at least $\sqrt{n}/4$ in the induced subgraph $G[C']$. Therefore, $G \in \cG_3$. Otherwise, if no vertex $v \in V(G) \setminus V(C)$ has this property, then the induced subgraph $G[C]$ has at least
$$
    n + \frac{n}{4 \log n} - \frac{\sqrt{n}}{4 \log n} \left(\frac{\sqrt{n}}{4 \log n} + \frac{\sqrt{n}}{4} \right) > n + \frac{n}{8 \log n}
$$
edges, with room to spare. As $|C| \le n$, we conclude $G \in \cG_4$.

We estimate the size of each $\cS(\cG_i)$ separately, and then use an upper bound
 $$
    |\cS(\cG(n))| \le |\cS(\cG_1)| + |\cS(\cG_2)| + |\cS(\cG_3)| + |\cS(\cG_4)|.
 $$

\begin{itemize}
\item From $\cS(G) \subseteq \{3, \ldots, n - \sqrt{n} / (4 \log n)\}$ for $G \in \cG_1$, we trivially have 
$$
    |\cS(\cG_1)| \le 2^{n - \sqrt{n} / (4 \log n)}.
$$

\item We use $|\cS(\cG_2)| \le |\cG_2|$. Every graph in $\cG_2$ can be obtained by starting with a cycle of size $n \ge \ell \ge n - x$, for some $x < \sqrt{n} / (4 \log n)$, and then adding remaining $r \le n / (4 \log n) + x < n / (3 \log n)$ edges. This can be done in at most $\binom{n}{2}^r < n^{2r} < 2^{2n/3}$ ways,  which gives us
$$
    |\cS(\cG_2)| < n^2 2^{2n/3}. 
$$

\item Set $p = \sqrt{n}/4$, and let $\cF' = \bigcup_{m = 1}^n \cF'(m, p)$, where $\cF'$'s are given by Lemma \ref{lemma:container_maxdeg}. Then for every $G \in \cG_3$ there exists $S \in \cF'$ such that $S \subseteq \cS(G') \subseteq \cS(G)$, where $G'$ is a Hamiltonian subgraph of $G$ with maximum degree at least $p$. Therefore, the family of up-sets of sets in $\cF'$ contains all possible cycle sets of graphs in $\cG_3$. This gives us the following:
$$
    |\cS(\cG_3)| \le \sum_{m = 1}^n \sum_{S \in \mathcal{F}'(m, p)} 2^{n - |S|} \stackrel{\eqref{eq:bound_F_prime}}{\le} n 2^{n - \Omega(\sqrt{n})}.
$$
Note that this is actually a stronger bound than needed.

\item The bound on $\cS(\cG_4)$ is obtained analogously, with Lemma \ref{lemma:container_maxdeg} replaced by Lemma \ref{lemma:count_ham}:
$$
    |\cS(\cG_4)| \le \sum_{m = 1}^n \sum_{S \in \cF(m, \frac{n}{8 \log n})} 2^{n - |S|} \stackrel{\eqref{eq:container}}{\le} 2^{n - \Omega(\sqrt{n} / \log^{3/2}(n))}.
$$
\end{itemize}
\end{proof}

\section{Concluding remarks} \label{sec:concluding}

We believe that the logarithmic factor in Theorem \ref{thm:main} can be somewhat optimised. It would be interesting to fully remove it, which at present seems difficult. Even if one could get an optimal bound $2^{-\Omega(\sqrt{p})}$ in Lemma \ref{lemma:count_ham}, which is a challenge on its own, it is not clear how to make use of it to reach $2^{n - \Omega(\sqrt{n})}$. In particular, the main bottleneck to further improvements seems to be the family $\cG_2$.  Let us briefly describe the issue.

Like Verstra\"{e}te \cite{verstraete04cycles}, we take care of $\cG_2$ by using a simple bound $|\cS(\cG_2)| \le |\cG_2|$. To make this upper bound non-trivial, the bound on the number of edges of the form $n + O(n / \log n)$ is crucial in the definition of $\cG_2$. Raising the threshold on the number of edges in $\cG_4$  to $n + \eps n$ would, with an optimal version of Lemma \ref{lemma:count_ham}, indeed give us a desired bound on the size of $\cS(\cG_4)$. However, this requires raising the threshold in the definition of $\cG_2$ as well, at which point handling $\cG_2$ becomes difficult -- too few edges to effectively use our container lemma, but enough to make the size of $\cG_2$ too large. 

As we have seen in Lemma \ref{lemma:container_maxdeg}, having a few edges is not per se a bottleneck if we know something about the structure of a graph. Unfortunately, the assumption of Lemma \ref{lemma:container_maxdeg} is rather specific and does not provide any ideas how to proceed further. To summarise, we see a better handling of the family consisting of graphs with a long cycle and (only) linear number of edges as a likely necessary step for any further advancement, at least for approaches  along the lines of the presented proof.

\bibliographystyle{abbrv}
\bibliography{cycle_sets}

\end{document}